\newtheorem{thm}{Theorem}[section]
\newtheorem{lemma}[thm]{Lemma}
\newtheorem{cl}{Claim}
\newtheorem{prop}[thm]{Proposition}
\newtheorem{ques}[thm]{Question}
\newtheorem{defn}[thm]{Definition}
\newtheorem{rem}[thm]{Remark}
\def \N {\mathbb N}
\def \Z {\mathbb Z}
\def \Q {\mathbb Q}
\numberwithin{equation}{section}
\begin{document}
		
	\baselineskip 14pt
	
	\title[]{monochromatic quotients, products and polynomial sums in the rationals}

	\author[]{Rongzhong Xiao}
	
	\address{School of Mathematical Sciences, University of Science and Technology of China, Hefei, Anhui, 230026, PR China}
	\email{xiaorz@mail.ustc.edu.cn}

\subjclass[2020]{Primary: 05D10; Secondary: 11B75, 37B05.}

 \begin{abstract}
	Let $k,a\in \N$ and let $p_1,\cdots,p_k\in \Q[n]$ with zero constant term. We show that for any finite coloring of $\Q$, there are non-zero $x,y\in \Q$ such that there exists a color which contains a set of the form $$\Big\{x,\frac{x}{y^a},x+p_{1}(y),\cdots,x+p_{k}(y)\Big\}$$ and there are non-zero $v,u\in \Q$ such that there exists a color which contains a set of the form $$\Big\{v,v\cdot {u^a},v+p_{1}(u),\cdots,v+p_{k}(u)\Big\}.$$
	\end{abstract}
		\maketitle
		
	\section{Introduction}
	In the investigation of partition of sets, looking for Ramsey family on $\N$ and $\Q$ is a center topic. A \textbf{Ramsey family} \bm{$\mathcal{A}$} \textbf{on} \bm{$\N$} is a finite set of the maps from ${\N}^{i}$ to $\Z$ where $i\in \N$ such that for any finite coloring of $\N$, there exists $x\in \N^{i}$ such that $\{f(x):f\in \mathcal{A}\}$ is monochromatic. Similarily, a \textbf{Ramsey family} \bm{$\mathcal{A}$} \textbf{on} $\Q$ is a finite set of the maps from ${\Q}^{i}$ to $\Q$ where $i\in \N$ such that for any finite coloring of $\Q$, there exists $x\in \Q^{i}$ such that $\{f(x):f\in \mathcal{A}\}$ is monochromatic.
	
	Naturally, we seek to search Ramsey family in $\Z[x_1,\cdots,x_s]$ and $\Q[x_1,\cdots,x_s]$ where $s\in \N$. 
	
	On $\N$, there are some results. I. Schur's theorem \cite{S} states the family $\{(x,y)\mapsto x,(x,y)\mapsto y,(x,y)\mapsto x+y\}$ is Ramsey on $\N$ and van der Waerden's thereom \cite{V} states for any $k\in \N$, the family $\{(x,y)\mapsto x,(x,y)\mapsto x+y,\cdots,(x,y)\mapsto x+ky\}$ is Ramsey on $\N$. For general linear polynomials, R. Rado built a equivalent condition for a family of linear polynomials to be Ramsey on $\N$ in \cite{R}. Based on it, we can verify that the family $\{x\mapsto x, x\mapsto x+3\}$ is not Ramsey on $\N$. For general polynomials, there are only a few results.  Furstenberg-Sark{\"o}zy theorem illustrates the family $\{(x,y)\mapsto x,(x,y)\mapsto x+y^{2}\}$ is Ramey on $\N$(see \cite{SA}) and V. Bergelson extended it to $\{(x,y)\mapsto x,(x,y)\mapsto y,(x,y)\mapsto x+y^{2}\}$ in \cite{B}. V. Bergelson and A. Leibman's polynomial extension of van der Waerden's thereom \cite{BL96} declares that for any $k\in \N$, for any $p_{1},\cdots,p_k\in \Z[n]$ with zero constant term, the family $\{(x,y)\mapsto x,(x,y)\mapsto x+p_{1}(y),\cdots,(x,y)\mapsto x+p_{k}(y)\}$ is Ramsey on $\N$. For aforementioned Ramsey familes on general polynomials, they do not contain polynomials $(x,y)\mapsto y$ and $(x,y)\mapsto x\cdot y$. For this, there exists a question which still lacks a complete answer .
	\begin{ques}\label{qu1}
		$($\cite[Question 3]{HS}$)$Is the family $\{(x,y)\mapsto x,(x,y)\mapsto y,(x,y)\mapsto x\cdot y,(x,y)\mapsto x+y\}$ Ramsey on $\N$? 
	\end{ques}
	For the question, J. Moreira answered it under leaving out polynomial $(x,y)\mapsto y$ in \cite[Corollary 1.5]{M}. If we consider the question on $\Q$, the fact that $(\Q\backslash \{0\},\cdot)$ is a group makes it easier than one in $\N$. Recently, M. Bowen and M. Sabok \cite[Theorem 1.1]{BS} showed that for any $k\in \N$, the family $\{(x,y)\mapsto x,(x,y)\mapsto y,(x,y)\mapsto x\cdot y,(x,y)\mapsto x+y,\cdots,(x,y)\mapsto x+ky\}$ is Ramsey on $\Q$. For general polynomials, J. Moreira 's theorem \cite[Theorem 1.4]{M} guarantees that for any $k\in \N$, for any $p_{1},\cdots,p_k\in \Z[n]$ with zero constant term, the family $\{(x,y)\mapsto x,(x,y)\mapsto x\cdot y,(x,y)\mapsto x+p_{1}(y),\cdots,(x,y)\mapsto x+p_{k}(y)\}$ is Ramsey on $\N$. Clearly, it is Ramsey on $\Q$. Our main result is to extend J. Moreira's family to a wider case on $\Q$ and it reflects the symmetry between multiplication and division on $\Q$. Specific statements are as follows.
	\begin{thm}\label{T1}
		Let $k,a\in \N$ and let $p_1,\cdots,p_k\in \Q[t]$ with zero constant term. For any finite coloring of $\Q$, then
		\begin{itemize}
			\item[(1)] there are non-zero $y\in \Q$ and an infinite subset $A$ of $\Q\backslash \{0\}$ such that $$A\cup (y^{-a}\cdot A) \cup (A+\{p_{i}(y):1\le i\le k\})$$ is monochromatic;
			\item[(2)] there are non-zero $u\in \Q$ and an infinite subset $B$ of $\Q\backslash \{0\}$ such that $$B\cup (u^{a}\cdot B) \cup (B+\{p_{i}(u):1\le i\le k\})$$ is monochromatic.
		\end{itemize}
	\end{thm}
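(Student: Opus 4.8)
The plan is to reduce Theorem~\ref{T1} to a single \emph{simultaneous recurrence} statement for one colour class of the given colouring under the two families of transformations of $\Q$ occurring in the configuration --- the multiplicative dilations $x\mapsto y^{\pm a}x$ and the additive (polynomial) shifts $x\mapsto x+p_i(y)$ --- while exploiting throughout the one structural feature that distinguishes $\Q$ from $\N$: that $(\Q\setminus\{0\},\cdot)$ is a \emph{group}. Fix a finite colouring $\chi\colon\Q\to\{1,\dots,r\}$. Then (2) is equivalent to the existence of a colour $c$, a nonzero $u$, and an infinite $B\subseteq\chi^{-1}(c)$ with $u^{a}B\subseteq\chi^{-1}(c)$ and $B+p_i(u)\subseteq\chi^{-1}(c)$ for every $i$, and (1) is the same with $u^{-a}$ in place of $u^{a}$. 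Since $x\mapsto y^{a}x$ is invertible on $\Q$, the multiplicative part of the argument can be run equally well ``forwards'' or ``backwards'', which is precisely what produces the two-sided conclusion (and the multiplication/division symmetry advertised in the abstract); it is also what lets one \emph{imitate}, rather than merely quote, Moreira's treatment of $\{x,xy,x+p_1(y),\dots,x+p_k(y)\}$ in \cite{M}.

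First I would dispose of the multiplicative constraint. Working inside the group $G=(\Q\setminus\{0\},\cdot)$, fix a minimal idempotent ultrafilter $q\in\beta G$; for the colour $c$ with $\chi^{-1}(c)\cap G\in q$ the set $\{y\in G:y^{-1}\chi^{-1}(c)\in q\}$ again belongs to $q$, hence is multiplicatively central, and out of it one extracts an IP-set $R$ with $\chi^{-1}(c)\cap w^{-1}\chi^{-1}(c)\in q$ for every $w\in R$ (and, applying the inversion automorphism of $G$, also $\chi^{-1}(c)\cap w\,\chi^{-1}(c)\in q$ for every $w$ in a companion IP-set, which is what (1) needs). The fact that the ratio in the configuration is a prescribed power $y^{a}$ rather than $y$ itself is absorbed by first passing to the subgroup $S=(\Q\setminus\{0\})^{a}$ of $a$-th powers --- itself a countable group with its own idempotent ultrafilter, and essentially isomorphic to $G$ since $y\mapsto y^{a}$ has finite kernel --- and carrying out the previous sentence inside $S$, so that the admissible multiplicative ratios come in the form $\{y^{a}:y\in Y\}$ with $Y$ still large in $G$.

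Next I would bring in the additive/polynomial structure through the Bergelson--Leibman polynomial van der Waerden theorem \cite{BL96}: for a fixed admissible ratio $y^{a}$, the set $E_y=\chi^{-1}(c)\cap y^{-a}\chi^{-1}(c)$ is the ``good'' part of the colour class, and one wants $x\in E_y$ with $x+p_i(y)\in\chi^{-1}(c)$ for all $i$ --- a single instance of the desired configuration. Finally, to upgrade from one such $x$ to an infinite $B$ attached to a \emph{single} $u$, I would use a compactness/diagonalization argument of the kind standard in Ramsey theory: for each $m$ the $m$-fold version of the configuration is still a finite partition statement amenable to the same scheme with one ratio $u=u_m$, and a limiting argument --- combining the finitely many colours with the product topology on $\{1,\dots,r\}^{\Q}$ --- yields one $u$ together with an infinite $B$. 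Part (1) is then obtained verbatim with $y^{-a}$ everywhere, using the invertibility of dilations and the companion IP-set produced above.

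The step I expect to be the genuine obstacle is the \emph{compatibility} of the multiplicative and the additive selections: the element $y$ must simultaneously lie in the multiplicatively rich set $Y$ and be such that the colour class $\chi^{-1}(c)$ supports the polynomial configuration $\{x,x+p_1(y),\dots,x+p_k(y)\}$ inside its ``good'' subset $E_y$. The multiplicative largeness of $E_y$ (membership in $q$) does not by itself confer the additive largeness that the polynomial van der Waerden machinery feeds on, so one cannot simply apply \cite{BL96} as a black box to $E_y$; instead one must interleave the ultrafilter argument in $G$ with the polynomial recurrence in $(\Q,+)$, using that $Y$, being central (in particular an IP-set) in $G$ and the $p_i$ having zero constant term, makes the step-sets $\{p_i(y):y\in Y\}$ large enough for the polynomial theorem to run with steps drawn from them --- the ``polynomial central sets'' phenomenon underlying both \cite{BL96} and Moreira's theorem. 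Making this interleaving precise, and carrying the extra bookkeeping needed for the infinite set $B$ and for the division case, is where essentially all of the difficulty lies; everything else is a matter of organizing these two classical inputs around the group structure of $\Q\setminus\{0\}$.
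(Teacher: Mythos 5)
There is a genuine gap, and you have in fact pointed at it yourself: the ``compatibility'' of the multiplicative and additive selections is not a technical bookkeeping issue but the entire content of the theorem, and your proposed mechanism for it does not work as stated. Choosing a colour class $\chi^{-1}(c)$ that is large for a minimal multiplicative idempotent $q\in\beta(\Q\setminus\{0\},\cdot)$ gives you no additive information whatsoever: a multiplicatively central set can be additively negligible, so the set $E_y=\chi^{-1}(c)\cap y^{-a}\chi^{-1}(c)$ need not be additively piecewise syndetic (or large in any sense usable by the polynomial van der Waerden machinery), and conversely the colour class in which the additive configuration $\{x,x+p_1(y),\dots,x+p_k(y)\}$ can be found need not be the one your ultrafilter selected. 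Likewise, the assertion that multiplicative centrality of $Y$ makes the step-sets $\{p_i(y):y\in Y\}$ ``large enough for the polynomial theorem to run'' is exactly the unproved bridge; no form of the Bergelson--Leibman theorem takes steps from a multiplicatively central set as a black box. So the sketch reduces the theorem to a statement that is essentially equivalent to it and is left unresolved.

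The paper bridges the two structures by a different and more concrete device, in the spirit of Moreira's argument \cite{M}. First, the Bowen--Sabok localization lemma (Lemma \ref{lem1}) replaces your single $q$-large colour class by finitely many multiplicatively thick unions $\bigcup_{m\in Y_l}C_m$ together with a finite multiplier set $F$, which is used to define a refined colouring of $\Q\setminus\{0\}$; partition regularity of piecewise syndeticity in $(\Q,+)$ (Theorem \ref{thm1}) then produces an \emph{additively} piecewise syndetic refined colour class $A_1$. Second, the additive largeness of the set of good increments is quantified as an $IP_r^*$ statement (Proposition \ref{prop3}, derived from Theorem \ref{thm2}), and the key observation is that a multiplicatively thick set contains additive $IP_r$ sets for every $r$; this is what lets one choose each increment $y_j$ simultaneously inside a multiplicatively thick union $\bigcup_{m\in Y_{l_j}}C_m$ and inside the additively $IP_r^*$ set of good polynomial return times for $A_j$. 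Finally, the multiplicative constraint $x/y^a$ is closed not by any single largeness argument but by iterating this construction $N$ times, tracking the colour of $x\cdot(y_1\cdots y_j)^{-a}$ through the refined colouring, and applying the pigeonhole principle to find two stages with identical colour data; the product $y=y_\eta\cdots y_{j-1}$ of the intermediate increments is the element realizing the configuration, and the infinitude of $A$ is automatic because the final class $A_j$ is piecewise syndetic, so no separate compactness or diagonalization step is needed. Without this interleaving device (thick $\Rightarrow$ additive $IP_r$, combined with the $IP_r^*$ recurrence and the pigeonhole over a refined colouring), or some equivalent substitute, your outline does not yield a proof.
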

    The proof of Theorem \ref{T1} is based on three ingredients. The first ingredient(see Theorem \ref{thm2}) is the multiple recurrence for polynomial mapping, built by V. Bergelson and A. Leibman in \cite{BL}, which helps us to build a van der Waerden-type result for piecewise syndetic subsets of $(\Q,+)$. The second ingredient(see Lemma \ref{lem1}), established by M. Bowen and M. Sabok in \cite{BS}, seeks to localize multiplicatively thick subsets according to the certain finite coloring of $\Q$. The third ingredient(see Theorem \ref{thm1}) is the partition regularity of piecewise syndetic subsets of $(\Q,+)$. 
    
    The organization of the paper is as follows. In section 2, we recall some large subsets and multiple recurrence for polynomial mappings and bulid a van der Waerden-type result. In section 3, we prove the Theorem \ref{T1}.
	\section{Preliminaries}
	\subsection{Some large subsets}
	At first, we state the definitions of some large subsets. Before this, we introduce some notations. Let $S$ be a non-empty set. Let $\mathcal{F}(S)$ denote all finite subsets of $S$ and $\mathcal{F}^{*}(S)$ denote all finite non-empty subsets of $S$.
	\begin{defn}
		Let $G$ be an infinite, countable, abelian group and let $A\subset G$. 
		\begin{itemize}
			\item[(a)] $A$ is \textbf{thick} if and only if for any $F\in \mathcal{F}^{*}(G)$, there is some $x\in G$ such that $Fx\subset A$; 
			\item[(b)] $A$ is \textbf{syndetic} if and only if there exists $F\in \mathcal{F}^{*}(G)$ such that $FA=G$;
			\item[(c)] $A$ is \textbf{piecewise syndetic} if and only if there exists $F\in \mathcal{F}^{*}(G)$ such that $FA$ is thick;
			\item[(d)] let $r\in\N$, $A$ is \bm{$IP_{r}$} if and only if there exist $s_1,\cdots,s_r\in G$ such that $FP(\{s_i\}_{i=1}^{r})=\{\prod_{i\in \alpha}s_i:\alpha\in \mathcal{F}^{*}(\{1,\cdots,r\})\}\subset A$;
			\item[(e)] let $r\in\N$, $A$ is \bm{$IP_{r}^{*}$} if and only if $A$ has non-empty intersection with any IP$_{r}$ subset of $G$.
		\end{itemize}
	\end{defn}
	The following results state some properties of the above large subsets.
	\begin{thm}\label{thm1}
		$($\cite[Theorem 2.5]{BLM}$)$Let $G$ be an infinite, countable, abelian group and let $A,B\subset G$. If $A\cup B$ is piecewise syndetic, then $A$ or $B$ is piecewise syndetic.
	\end{thm}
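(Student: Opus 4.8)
\textit{Proof plan.} I will argue directly: assuming $A\cup B$ is piecewise syndetic and $B$ is \emph{not}, I will show that $A$ is piecewise syndetic; the statement for a partition into finitely many pieces then follows by induction on the number of pieces. Throughout I write the group operation of $G$ additively (legitimate since $G$ is abelian), so $F+A=\{f+a:f\in F,\ a\in A\}$; recall that $A$ is piecewise syndetic exactly when there is a \emph{single} finite $F\subseteq G$ with $F+A$ thick, i.e.\ such that every finite $H\subseteq G$ admits $x\in G$ with $x+H\subseteq F+A$.

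Fix a finite $F$ witnessing that $(F+A)\cup(F+B)=F+(A\cup B)$ is thick. Since $B$ is not piecewise syndetic, $F+B$ is not thick, so there is a finite $H_0\subseteq G$, which we enlarge so that $0\in H_0$, such that no translate $w+H_0$ is contained in $F+B$. The mechanism of the proof is the following observation about a ``block'' $x+H$ lying inside $(F+A)\cup(F+B)$: the part of such a block missed by $F+A$ lies in $F+B$ and hence contains no translate of $H_0$, so every point of the block that is deep enough inside it is forced to be near a point of $F+A$. Precisely: if $x+H\subseteq(F+A)\cup(F+B)$ and $w$ satisfies $w+(H_0-H_0)\subseteq x+H$, then $w+H_0$ (which lies in $x+H$ because $0\in H_0$) is not contained in $F+B$, hence meets $F+A$, say $w+h_0\in F+A$ with $h_0\in H_0$; thus $w\in(F-h_0)+A\subseteq(F-H_0)+A$.

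To finish, let $H^{*}\subseteq G$ be an arbitrary finite set and put $H:=H^{*}+(H_0-H_0)$, again finite. By thickness of $(F+A)\cup(F+B)$ choose $x$ with $x+H\subseteq(F+A)\cup(F+B)$. A one-line check shows $x+H^{*}$ is contained in the ``core'' $\{w:w+(H_0-H_0)\subseteq x+H\}$, so the previous paragraph gives $x+H^{*}\subseteq(F-H_0)+A$. As $H^{*}$ was arbitrary and $F-H_0$ is a fixed finite set, $(F-H_0)+A$ is thick, i.e.\ $A$ is piecewise syndetic, as desired.

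I expect the only real work to be bookkeeping: keeping the quantifier alternations straight so that the finite witness $F-H_0$ produced for $A$ is genuinely independent of the test set $H^{*}$, and choosing the ``padding'' $H_0-H_0$ correctly so that translates of $H_0$ fit inside the relevant blocks. (A more conceptual but less self-contained route avoids all of this: piecewise syndetic sets are precisely the $A$ with $\overline{A}\cap K(\beta G)\neq\emptyset$ for the nonempty smallest ideal $K(\beta G)$ of $(\beta G,+)$, and since $\overline{A\cup B}=\overline{A}\cup\overline{B}$, any ultrafilter of $K(\beta G)$ containing $A\cup B$ must contain $A$ or $B$.)
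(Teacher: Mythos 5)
Your argument is correct. Note that the paper does not actually prove this statement: it is quoted as \cite[Theorem 2.5]{BLM} and used as a black box, so there is no internal proof to compare against, and what you have produced is a valid self-contained elementary verification of the cited result. The finitary mechanism checks out: since $B$ is not piecewise syndetic, $F+B$ is not thick for the fixed witness $F$ of $A\cup B$, so some finite $H_0\ni 0$ has no translate inside $F+B$; padding an arbitrary test set $H^{*}$ to $H=H^{*}+(H_0-H_0)$ and choosing a block $x+H\subseteq (F+A)\cup(F+B)$ puts every point $w\in x+H^{*}$ in the core, whence $w+H_0$ meets $F+A$ and $w\in (F-H_0)+A$; the witness $F-H_0$ is a fixed finite set independent of $H^{*}$, so $(F-H_0)+A$ is thick and $A$ is piecewise syndetic. (In fact the padding $H=H^{*}+H_0$ already suffices; the symmetric set $H_0-H_0$ is harmless overkill, and the induction remark is unnecessary since the statement concerns only two sets.) Your parenthetical route via the smallest ideal $K(\beta G)$ of $\beta G$ is the standard proof of this partition-regularity fact in the literature; your finitary version buys self-containedness at the cost of the quantifier bookkeeping you flag, and either argument fully supports the role the theorem plays in the paper, namely extracting a piecewise syndetic color class in the proof of Theorem \ref{T1}.
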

	\begin{prop}\label{prop1}
		Let $G$ be an infinite, countable, abelian group and let $A\subset G$. If $A$ is thick, then $A\backslash F$ is thick where $F\in \mathcal{F}^{*}(G)$.
	\end{prop}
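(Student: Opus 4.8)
The plan is to unwind the definition of thickness and observe that deleting finitely many elements can always be compensated by enlarging the test set. Fix an arbitrary $E\in\mathcal{F}^{*}(G)$; the goal is to produce $x\in G$ with $Ex\subseteq A\setminus F$. This requirement splits into the condition $Ex\subseteq A$, which is exactly what thickness of $A$ supplies, together with the disjointness condition $Ex\cap F=\emptyset$. The latter fails precisely when $ex=f$ for some $e\in E$ and $f\in F$, that is, when $x$ lies in the finite set $D:=\{e^{-1}f:e\in E,\ f\in F\}$, which has at most $|E|\cdot|F|$ elements. So it suffices to find $x\notin D$ with $Ex\subseteq A$.

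To arrange this I would apply thickness of $A$ not to $E$ but to a larger set that forces many admissible translates simultaneously. Since $G$ is infinite, pick $C\in\mathcal{F}^{*}(G)$ with $|C|>|D|$, and apply thickness of $A$ to the finite nonempty set $EC=\{ec:e\in E,\ c\in C\}$: there is $z\in G$ with $ECz\subseteq A$, i.e.\ $E(cz)\subseteq A$ for every $c\in C$. Because right translation by $z$ is injective, $\{cz:c\in C\}$ has $|C|>|D|$ elements, hence cannot be contained in $D$; choose $c_{0}\in C$ with $x:=c_{0}z\notin D$. Then $Ex\subseteq A$ while $Ex\cap F=\emptyset$, so $Ex\subseteq A\setminus F$. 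As $E\in\mathcal{F}^{*}(G)$ was arbitrary, $A\setminus F$ is thick.

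There is essentially no serious obstacle: the only point to keep in mind is to enlarge the test set rather than to try to control $A$ itself, and to use that $G$ is infinite in order to choose $C$ with $|C|>|D|$. An equivalent phrasing would introduce the auxiliary set $\{x\in G:Ex\subseteq A\}$, show it is itself thick (and therefore infinite, since a thick subset of an infinite group meets every finite cardinality) and then pick a point of it outside $D$; both routes are routine.
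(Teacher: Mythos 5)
Your proof is correct and follows essentially the same route as the paper: the paper likewise notes that the ``bad'' set $\{x\in G:Hx\cap F\neq\varnothing\}$ is finite while the set of good translates (there phrased as the thick set $\{x\in G:Hx\subset A\}$, obtained by the same trick of applying thickness to an enlarged test set) is plentiful, so one can pick $x$ with $Hx\subset A\setminus F$. Your closing remark already identifies this equivalent phrasing, which is exactly the paper's argument.
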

	\begin{proof}
		For any $H\in \mathcal{F}^{*}(G)$, $\{x\in G:Hx\cap F\neq \varnothing\}$ is finite or empty. Note that $\{x\in G:Hx\subset A\}$ is thick. Then we can find $x\in G$ such that $Hx\subset A\backslash F$. So $A\backslash F$ is still thick. This finishes the proof.
	\end{proof}
	Next, we focus on specific group $\Q$. To avoid ambiguity, the thick subset of group $(\Q,+)$ is called \textbf{additively thick} and the thick subset of group $(\Q\backslash \{0\},\cdot)$ is called \textbf{multiplicatively thick}. 
	
	The following lemma provided in \cite[Lemma 3.3]{BS} which plays a crucial role in the proof of our main result.
	\begin{lemma}\label{lem1}
		Let $\Q\backslash \{0\}=\bigcup_{i=1}^{n}C_i$ be a finite coloring. There exist $k\in \N$, index sets $Y_1,\cdots,Y_k\subset \{1,\cdots,n\}$ and $F\in \mathcal{F}^{*}(\Q\backslash \{0\})$ such that 
		\begin{itemize}
			\item[(a)] for any $1\le l\le k$, $\bigcup_{m\in Y_l}C_m$ is multiplicatively thick;
			\item[(b)] for any $x\in \Q\backslash \{0\}$, there exists $1\le l\le k$ such that for each $m\in Y_l$, one has $x\in F\cdot C_m$.
		\end{itemize}
	\end{lemma}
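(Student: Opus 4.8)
The plan is as follows. Since $\{1,\cdots,n\}$ has only finitely many subsets, the collection of index sets $Y\subseteq\{1,\cdots,n\}$ for which $\bigcup_{m\in Y}C_m$ is multiplicatively thick is a finite list, and it is nonempty because $\bigcup_{m=1}^{n}C_m=\Q\setminus\{0\}$ is multiplicatively thick. I would let $Y_1,\cdots,Y_k$ be exactly this list (taking only the inclusion-minimal such $Y$ would also work, but is not needed). Then part (a) holds by construction, and the whole content of the lemma becomes the production of a single finite $F\in\mathcal{F}^{*}(\Q\setminus\{0\})$ such that, for every $x$, the set of colours lying within $F$ of $x$ already has a multiplicatively thick union.

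To build $F$, for each index set $Z\subseteq\{1,\cdots,n\}$ such that $D_Z:=\bigcup_{m\in Z}C_m$ is \emph{not} multiplicatively thick, negate the definition of thickness: there is a finite set $H_Z\in\mathcal{F}^{*}(\Q\setminus\{0\})$ with $H_Z\cdot y\not\subseteq D_Z$ for every $y\in\Q\setminus\{0\}$. I would then set
$$F=\{1\}\cup\bigcup\big\{\,H_Z^{-1}:Z\subseteq\{1,\cdots,n\},\ D_Z\text{ not multiplicatively thick}\,\big\},$$
where $H_Z^{-1}=\{h^{-1}:h\in H_Z\}$. This is a finite nonempty subset of $\Q\setminus\{0\}$ because only finitely many such $Z$ occur.

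Finally I would verify the key claim: for every $x\in\Q\setminus\{0\}$, writing $I_x=\{m:x\in F\cdot C_m\}$, the union $\bigcup_{m\in I_x}C_m$ is multiplicatively thick. The point is that $m\in I_x$ if and only if $F^{-1}x\cap C_m\neq\varnothing$, so every element of the finite set $F^{-1}x$ lies in some $C_m$ with $m\in I_x$; that is, $F^{-1}x\subseteq D_{I_x}$. If $D_{I_x}$ were not multiplicatively thick, then $H_{I_x}$ would be defined, and $H_{I_x}^{-1}\subseteq F$ would give $H_{I_x}\subseteq F^{-1}$, hence $H_{I_x}\cdot x\subseteq F^{-1}x\subseteq D_{I_x}$, contradicting the choice of $H_{I_x}$ at $y=x$. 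So $D_{I_x}$ is multiplicatively thick, i.e.\ $I_x=Y_l$ for some $l$, and then $x\in F\cdot C_m$ for every $m\in Y_l$, which is precisely (b). The one delicate point is the passage from ``$D_Z$ not multiplicatively thick'' to a \emph{finite} witness $H_Z$ of this failure (immediate from the definition of thickness); everything else is the bookkeeping of packing these finitely many witnesses into one $F$ and checking that $F^{-1}x$ never escapes $D_{I_x}$.
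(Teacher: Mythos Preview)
The paper does not actually prove this lemma; it is quoted verbatim from \cite[Lemma 3.3]{BS} (Bowen--Sabok) and used as a black box, so there is no in-paper proof to compare against. That said, your argument is correct and self-contained: listing all $Y\subseteq\{1,\dots,n\}$ with multiplicatively thick $D_Y$, choosing for each non-thick $D_Z$ a finite witness $H_Z$ to the failure of thickness, and packaging the $H_Z^{-1}$ into a single $F$ is exactly the right idea, and your verification that $F^{-1}x\subseteq D_{I_x}$ forces $I_x$ to be one of the $Y_l$ is clean. The inclusion of $\{1\}$ in $F$ is harmless (and in fact redundant, since $Z=\varnothing$ already contributes a nonempty $H_\varnothing^{-1}$), but it does no damage. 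This is essentially the natural proof one would expect for such a statement, and is in the same spirit as the Bowen--Sabok argument.
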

	\subsection{Polynomial mapping $\mathcal{F}(S)\rightarrow G$ and multiple recurrence}
	Let $S$ be a non-empty set. Let $G$ be an infinite, countable, torsion-free abelian group with identity element $e_{G}$. We reproduce the notation of polynomial mapping $\mathcal{F}(S)\rightarrow G$ which introduced by V. Bergelson and A. Leibman in \cite[Section 1]{BL}.
	
	Let $\{g_t\}_{t\in T}$ be a collection of elements of $G$ indexed by a finite set $T$. We can define $\prod_{t\in T}g_t$. If $T$ is empty, we put $\prod_{t\in T}g_t=e_{G}$. Let $d\in \N$. We use $S^d$ to denote the produce $S\times \cdots \times S(d\  times)$. Conventionally, we let $S^{0}=\{\varnothing\}$. 
	
	Let $d\in \N\cup \{0\}$. A \textbf{monomial of degree} \bm{$d$} \textbf{on} \bm{$S$} \textbf{with values in} \bm{$G$} is a mapping $u:S^d\rightarrow G$. A monomial $u$ induces a \textbf{monomial mapping} $p_{u}:\mathcal{F}(S)\rightarrow G, \alpha \mapsto \prod_{s\in \alpha^{d}}u(s)$.
	
	A \textbf{polynomial mapping} $p:\mathcal{F}(S)\rightarrow G$ is the finite product of monomial mappings. The degree of $p$(denoted by $\deg p$) is the minimum, taken over the set of all representations of $p$ as the product $p=\prod_{i=1}^{m}p_{u_i}$ of monomial mappings, of the maximum of the degree of monomial $u_i,1\le i\le m$.
	
	The following theorem states the multiple recurrence phenomena for such polynomials.
	\begin{thm}\label{thm2}
		 $($\cite[Theorem 4.1]{BL}$)$Let $G$ be an infinite, countable, torsion-free abelian group of automorphisms of a compact metric space $(X,\rho)$. For any $k,d\in \N$ and any $\epsilon >0$, there exists $N\in \N$ such that if $S$ is a set with cardinality $\ge N$ and $p_1,\cdots,p_k:\mathcal{F}(S)\rightarrow G$ are polynomial mappings with $\deg p_i\le d,p_i(\varnothing)=Id_{X},1\le i\le k$, then there exist $x\in X$ and $\alpha\in \mathcal{F}^{*}(S)$ such that for each $1\le i\le k$, $\rho(x,p_{i}(\alpha)x)<\epsilon$.
	\end{thm}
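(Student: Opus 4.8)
The plan is to deduce Theorem~\ref{thm2} from the finitary combinatorial polynomial Hales--Jewett theorem for set-polynomials (the main combinatorial result of \cite{BL}) via the standard dynamical correspondence, with care taken to preserve the uniformity that yields a single $N$. As a preliminary I would record the two elementary structural facts about polynomial mappings that let them interface with combinatorial lines: for disjoint $\beta,\alpha\in\mathcal{F}(S)$ there is a translation formula $p(\beta\cup\alpha)=p(\beta)\cdot q_\beta(\alpha)$ in which $q_\beta\colon\mathcal{F}(S\setminus\beta)\to G$ is again a polynomial mapping with $q_\beta(\varnothing)=e_G$ and $\deg q_\beta\le\deg p$; and the discrete derivatives $\alpha\mapsto p(\alpha\cup\gamma)\,p(\alpha)^{-1}$ have degree $\le\deg p-1$. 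Both follow immediately from the defining formula $p_u(\alpha)=\prod_{s\in\alpha^{d}}u(s)$ together with the multinomial-type expansion of $(\beta\cup\alpha)^{d}$; these are the lemmas of \cite[Section~1]{BL}, which I would simply cite.

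Next comes the dynamical correspondence. Fix the system $(X,\rho)$ and $\epsilon>0$; since $X$ is compact, cover it by finitely many open balls $B_1,\dots,B_r$ of radius $\epsilon/3$, and set $r=r(X,\epsilon)$. Fix any $x_0\in X$ and define an $r$-colouring $G=\bigcup_{c=1}^{r}E_c$ by putting $g\in E_c$ whenever $g\cdot x_0\in B_c$ (ties broken arbitrarily). It then suffices to find $g\in G$, $\alpha\in\mathcal{F}^{*}(S)$ and a colour $c$ with $g\in E_c$ and $p_i(\alpha)\,g\in E_c$ for all $1\le i\le k$: for then $x:=g\cdot x_0$ and $\alpha$ satisfy $p_i(\alpha)x=p_i(\alpha)g\cdot x_0\in B_c\ni x$, so $\rho\bigl(x,p_i(\alpha)x\bigr)\le\operatorname{diam}B_c<\epsilon$ for each $i$, using that $G$ acts by homeomorphisms. (Adjoining the trivial polynomial mapping $p_0\equiv e_G$ accounts for the point $x$ itself, so this is genuinely a $(k+1)$-fold van der Waerden-type configuration in $G$.)

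The existence of such $g,\alpha,c$ --- for \emph{every} set $S$ with $|S|\ge N$, every torsion-free abelian $G$, every $r$-colouring of $G$, and all polynomial mappings $p_1,\dots,p_k\colon\mathcal{F}(S)\to G$ of degree $\le d$ with $p_i(\varnothing)=e_G$ --- is exactly the ``polynomial van der Waerden theorem'' for the group $G$, which \cite{BL} obtain from their finitary polynomial Hales--Jewett theorem by encoding a degree-$\le d$ polynomial mapping on an $N$-element set inside the free combinatorial structure on which Hales--Jewett operates, pulling the $r$-colouring of $G$ back to that structure, and reading the desired configuration off a monochromatic polynomial combinatorial line. Crucially, the number $N=N(r,k,d)$ delivered by Hales--Jewett depends only on $r$, $k$ and $d$ --- not on $G$, not on the $p_i$, and not on $X$ beyond the count $r$ --- which is precisely the uniform dependence asserted in Theorem~\ref{thm2}.

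The genuine weight of the argument sits in the combinatorial polynomial Hales--Jewett theorem itself, which I would invoke rather than reprove: its proof is a delicate PET-type induction (Bergelson's polynomial exhaustion technique) on the weight/degree vector of the family $(p_1,\dots,p_k)$, whose core move is the van der Waerden maneuver of colouring $G$-translates and then replacing the family by the pairwise differences $p_i(\cdot)^{-1}p_j(\cdot)$ together with the derivatives from the preliminary step; one must arrange the induction so that each stage strictly lowers a well-founded complexity measure while the Hales--Jewett (rather than merely van der Waerden) formulation keeps $N$ finite at every stage, with the classical multidimensional Hales--Jewett / Gallai--Witt theorem as the base case, and torsion-freeness of $G$ used to keep these reductions from degenerating. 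An alternative to the last two steps is measure-theoretic: $G$ abelian is amenable, so $X$ carries a $G$-invariant Borel probability measure $\mu$, and one applies the set-polynomial multiple recurrence theorem in the measure category to get $\mu\bigl(\bigcap_{i=0}^{k}p_i(\alpha)^{-1}B\bigr)>0$ for some ball $B$ of positive measure and some $\alpha\in\mathcal{F}^{*}(S)$; but this merely transfers the deep input to the ergodic polynomial Szemer\'edi theorem, so I would present the combinatorial route as primary.
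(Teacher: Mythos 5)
The paper itself offers no proof of Theorem \ref{thm2}: it is quoted verbatim from \cite[Theorem 4.1]{BL} and used as a black box, so there is no internal argument to compare yours against. Your sketch is essentially the derivation in \cite{BL} itself: cover $X$ by finitely many $\epsilon/3$-balls, colour $G$ according to where the orbit of a fixed point $x_0$ lands, and feed the configuration $\{g,\,p_1(\alpha)g,\dots,p_k(\alpha)g\}$ (with the identity mapping adjoined) to the finitary combinatorial polynomial Hales--Jewett/van der Waerden theorem; the correspondence step you spell out is correct, including the key point that $N$ depends only on the number of colours, $k$ and $d$, and not on $G$, $X$ or the particular $p_i$. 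The only schematic part is the passage from the combinatorial polynomial Hales--Jewett theorem to the monochromatic configuration for an arbitrary countable torsion-free abelian $G$ and arbitrary polynomial mappings $\mathcal{F}(S)\to G$ (the encoding into the free combinatorial structure), but since you explicitly delegate this to \cite{BL}, and the statement under review is itself nothing more than a citation of \cite{BL}, this does not constitute a genuine gap.
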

	\begin{rem}
		Assume that X is \textbf{minimal} with respect to the action of $G$, that is, $X$ does not contain proper non-empty closed $G$-invariant proper subsets. By \cite[Proof of Theorem 4.1]{BL}, the set of the points $x\in X$ satisfy the requirements of the theorem is dense in $X$.
	\end{rem}
	\subsection{A van der Waerden-type result}
	Based on the Theorem \ref{thm2}, we have the following result.
	\begin{prop}\label{prop2}
		Let $G$ be an infinite, countable, torsion-free abelian group with identity element $e_{G}$ and A be a piecewise syndetic subset of $G$. For any $k,d\in \N$, there exists $N\in \N$ such that if $S$ is a set with cardinality $\ge N$ and $p_1,\cdots,p_k:\mathcal{F}(S)\rightarrow G$ are polynomial mappings with $\deg p_i\le d,p_i(\varnothing)=e_{G},1\le i\le k$, then there exists $\alpha\in \mathcal{F}^{*}(S)$ such that $A\cap p_{1}(\alpha)^{-1}A\cap \cdots \cap p_{k}(\alpha)^{-1}A$ is piecewise syndetic.
	\end{prop}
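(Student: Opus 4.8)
The plan is to pass from $A$ to its indicator function inside a Bebutov-type symbolic system, extract a minimal subsystem that still ``sees'' the identity, run Theorem~\ref{thm2} there, and then push the resulting multiple recurrence back to a piecewise-syndeticity statement in $G$. Write the group operation in $G$ multiplicatively. Let $\Omega=\{0,1\}^{G}$, endowed with a standard compatible metric $\rho$ and the shift action $(T^{g}\omega)(h)=\omega(gh)$, so that $\Omega$ is a compact metric space and $G$ acts by homeomorphisms. Put $\xi=\mathbbm{1}_{A}\in\Omega$, $X=\overline{\{T^{g}\xi:g\in G\}}$, and $S(\omega)=\{g\in G:\omega(g)=1\}$; I will use repeatedly that $S(T^{g}\xi)=g^{-1}A$ and that, for a finite set $B\subset G$, one has $g\in\bigcap_{b\in B}b^{-1}A$ if and only if $(T^{g}\xi)(b)=1$ for every $b\in B$.

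\emph{Step 1: a minimal subsystem recording the identity.} Since $A$ is piecewise syndetic, $FA$ is thick for some $F\in\mathcal F^{*}(G)$. Exhausting $G$ by finite sets $K_{1}\subset K_{2}\subset\cdots$ and using thickness, choose $g_{n}$ with $K_{n}g_{n}\subset FA$, and let $\psi\in X$ be a subsequential limit of $\{T^{g_{n}}\xi\}$. A pigeonhole argument carried to the limit gives $FS(\psi)=G$, i.e.\ $S(\psi)$ is syndetic; hence $G\setminus S(\psi)$ is not thick, so the all-zero point of $\Omega$ does not lie in $\overline{\{T^{g}\psi\}}$ and every point of every subsystem of $\overline{\{T^{g}\psi\}}$ has non-empty support. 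Pick a minimal subsystem $Y\subset\overline{\{T^{g}\psi\}}\subset X$; translating a point of $Y$ yields $\eta^{*}\in Y$ with $\eta^{*}(e_{G})=1$, so $U:=\{\omega\in Y:\omega(e_{G})=1\}$ is a non-empty clopen subset of the minimal system $Y$.

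\emph{Step 2: multiple recurrence on a minimal system.} Because $G$ is countable abelian it embeds continuously into a compact metrizable abelian group through its characters; let $Z$ be the closure of the image, on which $G$ acts minimally and faithfully by translations. Choose a minimal subsystem $M$ of the diagonal $G$-system $Y\times Z$. Its projections to $Y$ and to $Z$ are closed and $G$-invariant, hence surjective (minimality of $Y$ and of $Z$); thus $M\cap(U\times Z)\neq\varnothing$, and faithfulness of the action on $Z$ forces the action of $G$ on $M$ to be faithful, so $\{\,g|_{M}:g\in G\,\}$ is an infinite, countable, torsion-free abelian group of homeomorphisms of the compact metric space $M$. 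Composing the $p_{i}$ with $g\mapsto g|_{M}$ produces polynomial mappings $q_{i}\colon\mathcal F(S)\to\{g|_{M}\}$ with $\deg q_{i}\le d$ and $q_{i}(\varnothing)=\mathrm{Id}_{M}$. Fix $\epsilon>0$ small enough that $\rho(\omega,\omega')<\epsilon$ forces $\omega(e_{G})=\omega'(e_{G})$, let $N$ be the integer produced by Theorem~\ref{thm2} for $k$, $d$, $\epsilon$ and this system, and assume $|S|\ge N$. By the remark following Theorem~\ref{thm2} (minimality of $M$ together with openness of $U\times Z$), there are $(x,z)\in M\cap(U\times Z)$ and $\alpha\in\mathcal F^{*}(S)$ with $\rho\big(x,T^{p_{i}(\alpha)}x\big)<\epsilon$ for all $i$. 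Then $x(e_{G})=1$ and $x(p_{i}(\alpha))=(T^{p_{i}(\alpha)}x)(e_{G})=x(e_{G})=1$; hence $x(b)=1$ for every $b$ in the finite set $B:=\{e_{G}\}\cup\{p_{i}(\alpha):1\le i\le k\}$, while $x$ still lies in the minimal subsystem $Y$ of $X$.

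\emph{Step 3: return to $G$.} Set $\widetilde V=\{\omega\in X:\omega(b)=1\text{ for all }b\in B\}$, a clopen subset of $X$ containing $x$, and $V=\widetilde V\cap Y\ni x$. By minimality of $Y$ there is a finite $E\subset G$ with $Y\subset\bigcup_{e\in E}T^{e}V\subset\bigcup_{e\in E}T^{e}\widetilde V=:W$, an open neighbourhood of $Y$ in $X$. Since $\{T^{g}\xi\}$ is dense in $X\supset Y$, for every finite $K\subset G$ one can approximate a point of $Y$ by some $T^{g}\xi$ and shift to obtain $Kg\subset\{h:T^{h}\xi\in W\}$; hence $\{h:T^{h}\xi\in W\}$ is thick. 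But $\{h:T^{h}\xi\in W\}=E\cdot C$ where $C:=\{h:T^{h}\xi\in\widetilde V\}=\bigcap_{b\in B}b^{-1}A=A\cap\bigcap_{i=1}^{k}p_{i}(\alpha)^{-1}A$, so $C$ is piecewise syndetic, which is exactly the conclusion for this $\alpha$. The step I expect to cause trouble is Step~2: applying Theorem~\ref{thm2} \emph{literally} needs the acting group to be torsion-free, and the $G$-action on a symbolic minimal subsystem need not be (it may factor through a finite quotient), which is why one first crosses with the faithful minimal system $Z$; one must also use the density clause of the remark after Theorem~\ref{thm2} to place the recurrent point inside $U\times Z$. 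The passages in Steps~1 and~3 between combinatorial largeness in $G$ and topological largeness in $X$ are routine but have to be run carefully with respect to the side on which $G$ acts.
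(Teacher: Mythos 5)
Your proposal is correct and follows essentially the same path as the paper's proof: form the orbit closure of $1_A$ in $\{0,1\}^{G}$, use piecewise syndeticity to locate a minimal subsystem avoiding the zero point, apply Theorem \ref{thm2} together with the density remark to obtain a recurrent point whose support contains $e_{G}$ and all $p_{i}(\alpha)$, and transfer this back to piecewise syndeticity of $A\cap p_{1}(\alpha)^{-1}A\cap\cdots\cap p_{k}(\alpha)^{-1}A$. The only real differences are your extra product with the faithful minimal system $Z$ so that the acting group on the minimal system is literally a torsion-free group of homeomorphisms (the paper applies Theorem \ref{thm2} to $(Y,G)$ directly, leaving that technicality implicit) and a mildly different but equivalent transfer step (thickness of visit times to an open neighbourhood of $Y$, versus the paper's syndetic return times and explicitly constructed translated finite blocks).
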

	\begin{proof}
		Let $\Omega=\{0,1\}^{G}$. The element of $\Omega$ can be written as $\textbf{w}=(w(g))_{g\in G}$. Specially, let $\textbf{0}$ denote element \textbf{y} with $y(g)=0$ for any $g\in G$. Since $G$ is countable, we can write $G$ as $\{g_1,g_2,\cdots,g_n,\cdots\}$. Define a metric $\rho$ on $\Omega$ by $$\rho(\textbf{w},\textbf{u})=\frac{1}{\min\{i\in \N:w(g_i)\neq u(g_i)\}}$$ for any $\textbf{w},\textbf{u}\in \Omega$. Then $(\Omega,\rho)$ is a compact metric space. $G$ can act on $(\Omega,\rho)$ by $(g\textbf{w})(h)=\textbf{w}(gh)$ for any $\textbf{w}\in \Omega,g,h\in G$.
		
		Define $\textbf{v}\in \Omega$ by $v(g)=1_{A}(g)$ for any $g\in G$. Let $X=\overline{\{g\textbf{v}:g\in G\}}$. Then we have the following claim. Its proof will be provided in the last paragraph.
		\begin{cl}\label{cl1}
			There exists $\textbf{0}\neq\textbf{x}\in X$ such that $Y$ is minimal with respect to the action of $G$ where $Y=\overline{\{g\textbf{x}:g\in G\}}$. 
		\end{cl}
		 Based on this claim, we can reach the conclusion. Let $U=\{\textbf{w}\in \Omega:w(e_{G})=1\}$. Apply Theorem \ref{thm2} to $(Y,\rho),k,d$ and $\frac{1}{2^{m}}$ where $g_m=e_{G}$, then there exists $N\in \N$ such that if $S$ is a set with cardinality $\ge N$ and $p_1,\cdots,p_k:\mathcal{F}(S)\rightarrow G$ are polynomial mappings with $\deg p_i\le d,p_i(\varnothing)=Id_{Y},1\le i\le k$, then there exist $\textbf{z}\in Y\cap U$ and $\alpha\in \mathcal{F}^{*}(S)$ such that for each $1\le i\le k$, $\textbf{z}(e_{G})=\textbf{z}(p_{i}(\alpha))$.
		 
		 Let $V=\{\textbf{w}\in \Omega:w(e_{G})=w(p_{1}(\alpha))=\cdots =w(p_{k}(\alpha))=1\}$. Since $(Y,G)$ is minimal, the set $B=\{g\in G:g\textbf{z}\in Y\cap V\}=\{a_1,a_2,\cdots,a_n,\cdots\}$ is a syndetic subset of $G$. Moreover, we can find a pairwise distinct sequence $\{h_j\}_{j\ge 1}\subset G$ such that for each $j\ge 1$, we have $h_{j}a_{1}\textbf{v},\cdots,h_{j}a_{j}\textbf{v}\in V$. That is, 
		 $$A\cap p_{1}(\alpha)^{-1}A\cap \cdots \cap p_{k}(\alpha)^{-1}A\supset \bigcup_{j\ge 1}\{h_{j}a_{1},\cdots,h_{j}a_{j}\}.$$ Therefore, $A\cap p_{1}(\alpha)^{-1}A\cap \cdots \cap p_{k}(\alpha)^{-1}A$ is piecewise syndetic.
		 
		 The rest of the proof is to verify Claim \ref{cl1}. Let $\{F_n\}_{n\ge 1}$ be a strictly increasing sequence of $\mathcal{F}^{*}(G)$ with $\bigcup_{n\ge 1}F_n=G$. Since $A$ is piecewise syndetic, there exist $H\in \mathcal{F}^{*}(G)$ and sequence $\{a_n\}_{n\ge 1}\subset G$ such that $a_{n}^{-1}F_{n}\subset H^{-1}A$. Without loss of generality, we can say that $a_n\textbf{v}\rightarrow \textbf{u}$ as $n\rightarrow \infty$ where $\textbf{u}\in X$. Let $C=\{g\in G:\textbf{u}(g)=1\}$. For any $h\in G$, there exist infinite $n\ge 1$ and $g\in H$ such that $gh\in a_{n}A$. That is, $gh\in C$. Clearly, $H^{-1}C=G$. So $C$ is syndetic. Let $Z=\overline{\{g\textbf{u}:g\in G\}}\subset X$. Clearly, $\textbf{0}\notin Z$. Take a non-empty closed $G$-invariant subset $Z'$ of $Z$ such that $(Z',G)$ is minimal. Then any element of $Z'$ can satisfy the requirments of Claim \ref{cl1}. This finishes the proof.
	\end{proof}
	Based on the above, we can bulid a van der Waerden-type result for piecewise syndetic subsets of infinite, countable, torsion-free abelian group. Before specific statements, we introduce the degree for polynomials between general abelian groups.
	\begin{defn}
		$($\cite[Definition 7.7]{BG}$)$Let $G$ and $H$ be abelian groups. Given $d\in \N$, a map $p:G\rightarrow H$ is a \textbf{polynomial of degree} \bm{$d$}  if the application of any $d+1$ of the discrete difference operators $\delta_g,g\in G$ defined by $(\delta_{g}p)(x)=p(gx)(p(x))^{-1}$ for any $x\in G$, reduces $p$ to the constant map which takes identity element of $H$.
	\end{defn}
	\begin{prop}\label{prop3}
		Let $H$ be an infinite, countable, torsion-free abelian group with with identity element $e_{H}$ and A be a piecewise syndetic subset of $H$. For any $k,d\in \N$, there exists $r\in \N$ such that for any infinite, countable, torsion-free abelian group $G$ with identity element $e_{G}$ and all polynomials $p_1,\cdots,p_k:G\rightarrow H$ of degree at most $d$ with $p_i(e_{G})=e_{H},1\le i\le k$, the set $$\{g\in G:A\cap p_{1}(g)^{-1}A\cap \cdots \cap p_{k}(g)^{-1}A\  is\ a\ piecewise\ syndetic\}$$ is an $IP_{r}^{*}$ subset of $G$.
	\end{prop}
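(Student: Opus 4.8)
The plan is to deduce Proposition \ref{prop3} from Proposition \ref{prop2} by transference: an $IP_r$ set in $G$ supplies exactly the data---a large finite index set together with polynomial mappings on it---to which Proposition \ref{prop2} can be applied inside $H$, and a suitable element of the $IP_r$ set is then read off from its conclusion. Fix $k,d\in\N$ and let $N=N(H,A,k,d)$ be the constant given by Proposition \ref{prop2} for the group $H$, the piecewise syndetic set $A$, and the parameters $k,d$; put $r:=N$, which depends only on $H,A,k,d$, as required. Let $G$ be an arbitrary infinite, countable, torsion-free abelian group, let $p_1,\dots,p_k\colon G\to H$ be polynomials of degree at most $d$ with $p_i(e_G)=e_H$ for $1\le i\le k$, and let $FP(\{s_i\}_{i=1}^{r})$ be an arbitrary $IP_{r}$ subset of $G$, with generators $s_1,\dots,s_r\in G$. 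By the definition of $IP_{r}^{*}$ it suffices to exhibit one $g\in FP(\{s_i\}_{i=1}^{r})$ for which $A\cap p_{1}(g)^{-1}A\cap\cdots\cap p_{k}(g)^{-1}A$ is piecewise syndetic in $H$.

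Set $S:=\{1,\dots,r\}$, so $|S|=r=N$, and let $\phi\colon\mathcal{F}(S)\to G$ be $\phi(\alpha)=\prod_{j\in\alpha}s_j$; this is a polynomial mapping of degree $1$ (namely $\phi=p_u$ for the monomial $u\colon S^{1}\to G$, $u(j)=s_j$), and it satisfies $\phi(\alpha\cup\beta)=\phi(\alpha)\phi(\beta)$ whenever $\alpha\cap\beta=\varnothing$. Put $q_i:=p_i\circ\phi\colon\mathcal{F}(S)\to H$, so that $q_i(\varnothing)=p_i(e_G)=e_H$. The heart of the matter is the claim that each $q_i$ is a polynomial mapping $\mathcal{F}(S)\to H$ of degree at most $d$. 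I would argue as follows: the map $\psi\colon\Z^{r}\to G$, $(n_1,\dots,n_r)\mapsto\prod_{j}s_j^{n_j}$, is a group homomorphism, so $p_i\circ\psi\colon\Z^{r}\to H$ is a polynomial of degree at most $d$ vanishing at $0$; expanding it in the binomial basis (cf.\ \cite{BG}) and evaluating at the $0$--$1$ vectors $\mathbf 1_{\alpha}$, $\alpha\subseteq S$, only the square-free multi-indices survive, which yields $q_i(\alpha)=\prod_{\varnothing\ne\beta\subseteq\alpha,\ |\beta|\le d}c_{\beta}$ for suitable $c_{\beta}\in H$. Grouping by $|\beta|=e$ and using the linear order on $S$ to single out, for each $e$-element set, its increasing enumeration, the degree-$e$ part of this product is the monomial mapping attached to a monomial $S^{e}\to H$; hence $q_i$ is a product of monomial mappings of degrees at most $d$, so $\deg q_i\le d$. (The order on $S$ is used precisely to avoid extracting $e!$-th roots in $H$; alternatively, one can verify directly that the $(d+1)$-st iterated difference of $q_i$ along pairwise disjoint subsets of $S$ is trivial, using $\phi(\alpha\cup\beta)=\phi(\alpha)\phi(\beta)$ and $\deg p_i\le d$.)

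Granting this, apply Proposition \ref{prop2} to $H$, $A$, $k$, $d$, the set $S$ (of cardinality $r=N$), and the polynomial mappings $q_1,\dots,q_k$, obtaining $\alpha\in\mathcal{F}^{*}(S)$ for which $A\cap q_{1}(\alpha)^{-1}A\cap\cdots\cap q_{k}(\alpha)^{-1}A$ is piecewise syndetic. Setting $g:=\phi(\alpha)=\prod_{j\in\alpha}s_j$, the non-emptiness of $\alpha$ gives $g\in FP(\{s_i\}_{i=1}^{r})$, while $q_i(\alpha)=p_i(g)$ for every $i$; hence $A\cap p_{1}(g)^{-1}A\cap\cdots\cap p_{k}(g)^{-1}A$ is piecewise syndetic, as needed. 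Since the $IP_{r}$ set was arbitrary, the set in the statement intersects every $IP_{r}$ subset of $G$, i.e.\ it is $IP_{r}^{*}$. The only genuinely non-formal step is the degree bound $\deg q_i\le d$ for the composite $q_i=p_i\circ\phi$; everything else is bookkeeping, so that is where the written proof should concentrate its care.
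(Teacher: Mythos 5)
Your proposal is correct and follows essentially the same route as the paper: the paper also sets $r$ equal to the constant from Proposition \ref{prop2}, takes arbitrary generators $g_1,\dots,g_r$ of the given $IP_r$ set, defines $\overline{p_i}(\alpha)=p_i\bigl(\prod_{m\in\alpha}g_m\bigr)$ on $\mathcal{F}(\{1,\dots,r\})$, and applies Proposition \ref{prop2} to these mappings. The only difference is that the paper dismisses the degree bound $\deg\overline{p_i}\le d$ with ``clearly,'' whereas you supply the binomial-basis justification for that step, which is a welcome addition rather than a divergence.
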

	\begin{proof}
		By Proposition \ref{prop2}, we can get $r\in \N$ such that if $S$ is a set with cardinality $\ge r$ and $q_1,\cdots,q_k:\mathcal{F}(S)\rightarrow H$ are polynomial mappings with $\deg q_i\le d,q_i(\varnothing)=e_{H},1\le i\le k$, then there exists $\alpha\in \mathcal{F}^{*}(S)$ such that $A\cap q_{1}(\alpha)^{-1}A\cap \cdots \cap q_{k}(\alpha)^{-1}A$ is a piecewise syndetic subset of $H$.
		
		Choose $g_1,\cdots,g_r$ from $G$ arbitrarily. For any $1\le i\le k$, define polynomial mapping $\overline{p_i}:\mathcal{F}(\{1,\cdots,r\})\rightarrow H$ by the rule $\overline{p_i}(\alpha)=p_i(\prod_{m\in \alpha}g_i)$ for any $\alpha\in \mathcal{F}(\{1,\cdots,r\})$. Clearly, $\deg \overline{p_i}\le d,\overline{p_i}(\varnothing)=e_{H}$ for any $1\le i\le k$. So there exists $\beta\in \mathcal{F}^{*}(\{1,\cdots,r\})$ such that $A\cap \overline{p_1}(\beta)^{-1}A\cap \cdots \cap \overline{p_1}(\beta)^{-1}A$ is a piecewise syndetic subset of $H$. This finishes the proof.
	\end{proof}
	\section{Proof of Theorem \ref{T1}}
	In this section, we prove our main result. Here, we only provide proof for $(1)$ of Theorem \ref{T1} since the proof of the rest part is similar. During the process, the key points are Lemma \ref{lem1}, Proposition \ref{prop3} and pigeonhole principle.
	\begin{proof}[Proof of Theorem \ref{T1}]
		There exists $d\in \N$ such that $\deg p_i\le d,1\le i\le k$. Choose a finite coloring of $\Q$ arbitrarily and fix it. Then $\Q\backslash \{0\}$ can inherit a coloring from $\Q$. We write it as $\Q\backslash \{0\}=\bigcup_{m=1}^{n}C_{n}$. Clearly, $0$ has new color $n+1$ or there exists $\omega\in \{1,\cdots,n\}$ such that $0$ has color $\omega$.
		
		By Lemma \ref{lem1}, There exist $M\in \N$, index sets $Y_1,\cdots,Y_M\subset \{1,\cdots,n\}$ and $H\in \mathcal{F}^{*}(\Q\backslash \{0\})$ such that 
		\begin{itemize}
			\item[(1)] for any $1\le l\le M$, $\bigcup_{m\in Y_l}C_m$ is multiplicatively thick;
			\item[(2)] for any $x\in \Q\backslash \{0\}$, there exists $1\le l\le M$ such that for each $m\in Y_l$, one has $x\in H\cdot C_m$.
		\end{itemize}
		Let $s$ be a non-zero rational less than minimum of $H$. Let $F=H\cup \{s\}$. Then for any $x\in \Q\backslash \{0\}$, we can find minimal $1\le l_x\le M$ such that for each $m\in Y_{l_x}$, one has $x\in f_{m,x}\cdot C_m$ where $f_{m,x}=\min\{f\in H:x\in f\cdot C_m\}$. If $m\in \{1,\cdots,n\}\backslash Y_{l_x}$, let $f_{m,x}$ be $s$. Then we define new finite coloring of $\Q\backslash \{0\}$. That is, for any $x\in \Q\backslash \{0\}$, it has color $(l_x,f_{1,x},\cdots,f_{n,x})\in \{1,\cdots,M\}\times F^{n}$.
		
		By Theorem \ref{thm1} and Proposition \ref{prop1}, we know there exists $(l_1,f_{1,1},\cdots,f_{n,1})\in \{1,\cdots,M\}\times F^{n}$ such that the set $$A_1=\{x\in \Q\backslash \{0\}: x\ has\ color\ (l_1,f_{1,1},\cdots,f_{n,1})\}$$ is a piecewise syndetic subset of $(\Q,+)$. Let $N=36^{100M|F|^{n}},T=36^{100N|F|}k$. Apply Proposition \ref{prop3} to $T,d,A_1$, then we get a natural number $r_1$. We can construct $IP_{r_1}$ subset $S_{1}$ of $(\Q,+)$ such that $S_{1}\subset \bigcup_{m\in Y_{l_1}}C_m$. Let $$Q_1=\{f\cdot p_{i}(t):1\le i\le k,f\in F\}.$$ Clearly, $|Q_1|<T$. Then there exists $y_1\in S_{1}$ such that $$\tilde{A_1}=A_{1}\bigcap_{q\in Q_1}(A_{1}-q(y_1))$$ is a piecewise syndetic subset of $(\Q,+)$. 
		
		Next, we construct $r_j,A_j,\tilde{A_j},Q_j,y_j,S_{j},(l_j,f_{1,j},\cdots,f_{n,j})$ by induction until $j=N$ under the following requirements: for any $1\le j\le N$, we have 
		\begin{itemize}
			\item[(a)] $(l_j,f_{1,j},\cdots,f_{n,j})\in \{1,\cdots,M\}\times F^{n},r_j\in \N$;
			\item[(b)] $S_j$ is an $IP_{r_j}$ subset of $(\Q,+)$ and $S_{j}\subset \bigcup_{m\in Y_{l_j}}C_m$;
			\item[(c)] $y_j\in S_j$;
			\item[(d)] $Q_j=\Big\{(y_{1}\cdots y_{c-1})^{a}\cdot f\cdot p_{i}(t\cdot y_{c}\cdots y_{j-1}):f\in F,1\le i\le k, 1\le c<j\Big\}$ where we put $y_{1}\cdots y_{0}=1$ and $|Q_j|<T$;
			\item[(e)] $A_j$ and $\tilde{A_j}=A_{j}\bigcap_{q\in Q_j}(A_{j}-q(y_j))$ are two piecewise syndetic subsets of $(\Q,+)$.
		\end{itemize}
		and for any $1\le j< N$, we have
		\begin{itemize}
			\item[(f)] $A_{j+1}\subset A_{j}\bigcap_{q\in Q_j}(A_{j}-q(y_j))$;
			\item[(g)] $A_{j+1}=\{x\in \tilde{A_j}: x\cdot (\prod_{b=1}^{j}{y_b})^{-a}\ has\ color\ (l_{j+1},f_{1,j+1},\cdots,f_{n,j+1})\}$.
		\end{itemize}
		Clearly, we have finished construction for $j=1$. Let $j\ge 1$ and assume that $r_j,A_j,\tilde{A_j},Q_j,y_j,S_{j},(l_j,f_{1,j},\cdots,f_{n,j})$ have been constructed. 
		
		By Theorem \ref{thm1}, there exist a subset $A_{j+1}$ of $\tilde{A_j}$ which is a piecewise syndetic subset of $(\Q,+)$ and $(l_{j+1},f_{1,j+1},\cdots,f_{n,j+1})\in \{1,\cdots,M\}\times F^{n}$ such that $$A_{j+1}=\{x\in \tilde{A_j}: x\cdot (\prod_{b=1}^{j}{y_b})^{-a}\ has\ color\ (l_{j+1},f_{1,j+1},\cdots,f_{n,j+1})\}.$$ Let $$Q_{j+1}=\Big\{(y_{1}\cdots y_{c-1})^{a}\cdot f\cdot p_{i}(t\cdot y_{c}\cdots y_{j}):f\in F,1\le i\le k, 1\le c<j+1\Big\}. $$ Clearly, $|Q_{j+1}|<T$. Apply Proposition \ref{prop3} to $T,d,A_{j+1}$, then we get a natural number $r_{j+1}$. We can construct $IP_{r_{j+1}}$ subset $S_{j+1}$ of $(\Q,+)$ such that $S_{j+1}\subset \bigcup_{m\in Y_{l_{j+1}}}C_m$. And there exists $y_{j+1}\in S_{j+1}$ such that $$\tilde{A}_{j+1}=A_{j+1}\bigcap_{q\in Q_{j+1}}(A_{2}-q(y_{j+1}))$$ is a piecewise syndetic subset of $(\Q,+)$. 
		
		Obviously, there exist $2<\eta<j<N$ and $(l,f_{1},\cdots,f_{n})\in \{1,\cdots,M\}\times F^{n}$ such that $$(l,f_{1},\cdots,f_{n})=(l_j,f_{1,j},\cdots,f_{n,j})=(l_{\eta},f_{1,\eta},\cdots,f_{n,\eta})$$ and $j-\eta>2$. Let $y=y_{\eta}\cdots y_{j-1}$. Let $x'\in {A_j}$ and set $x=(f_m)^{-1}\cdot x' \cdot (y_{1}\cdots y_{\eta-1})^{-a}$ where $m\in Y_{l}$. So $x,\frac{x}{y^{a}}\in C_m$. Moreover, for any $q\in Q_{j-1}$, $x'+q(y_{j-1})\in A_{\eta}$. Then for any $q\in Q_{j-1}$, we have 
		$$x'\cdot (y_{1}\cdots y_{\eta-1})^{-a}+q(y_{j-1})\cdot (y_{1}\cdots y_{\eta-1})^{-a}\in f_{m}\cdot C_{m}.$$ Therefore, for each $1\le i\le k$, we have $x+p_{i}(y)\in C_m$ by definition of $Q_{j-1}$. This finishes the proof.
	\end{proof}
	In the above proof, we can not determine the color of $y$.
	For linear polynomials with zero constant term, V. Bergelson and D. Glasscock gave the upper Banach density version of Proposition \ref{prop3}(see \cite[Theorem 7.5]{BG}). By combining \cite[Proof of Theorem 4.3]{BS} and the above proof, we have the following result.
	\begin{prop}
		For any $k,n\in \N$, the families $\{(x,y)\mapsto x,(x,y)\mapsto y,(x,y)\mapsto x\cdot y^{n},(x,y)\mapsto x+y,\cdots,(x,y)\mapsto x+ky\}$ and $\{(x,y)\mapsto x,(x,y)\mapsto y,(x,y)\mapsto x\cdot y^{-n},(x,y)\mapsto x+y,\cdots,(x,y)\mapsto x+ky\}$ are Ramsey on $\Q$.
	\end{prop}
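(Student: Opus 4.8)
The plan is to mimic the structure of the proof of Theorem \ref{T1} but with the piecewise syndeticity replaced by positive upper Banach density, exactly as \cite[Theorem 7.5]{BG} upgrades Proposition \ref{prop3} to a density statement for linear polynomials with zero constant term. First I would fix a finite coloring of $\Q$, pass to $\Q\backslash\{0\}$, and apply Lemma \ref{lem1} to obtain the index sets $Y_1,\dots,Y_M$, the multiplicatively thick pieces $\bigcup_{m\in Y_l}C_m$, and the finite set $F$. As in the proof of Theorem \ref{T1} this produces a refined finite coloring of $\Q\backslash\{0\}$ by $\{1,\dots,M\}\times F^n$, and by the pigeonhole principle at least one color class $A_1$ has positive upper Banach density in $(\Q,+)$ (this replaces the appeal to Theorem \ref{thm1}; note that a finite union of sets of zero density has zero density, so one class must have positive density).

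Next I would run the same inductive construction of $A_j,\tilde A_j,Q_j,y_j,S_j$ and the colors $(l_j,f_{1,j},\dots,f_{n,j})$, with two changes. First, at each step the multiplier $y_j$ is drawn from an $IP_{r_j}$ set inside the multiplicatively thick piece $\bigcup_{m\in Y_{l_j}}C_m$, where $r_j$ now comes from the density version of Proposition \ref{prop3}; this guarantees that the intersection $\tilde A_j=A_j\cap\bigcap_{q\in Q_j}(A_j-q(y_j))$ still has positive upper Banach density. Second, the passage from $\tilde A_j$ to $A_{j+1}$ uses that restricting to one color class of a positive-density set again leaves a positive-density class (pigeonhole for upper Banach density). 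Because $N$ is chosen large relative to $M|F|^n$, the pigeonhole principle yields indices $\eta<j$ with matching colors and $j-\eta>2$; then setting $y=y_\eta\cdots y_{j-1}$ and $x=(f_m)^{-1}\cdot x'\cdot(y_1\cdots y_{\eta-1})^{-a}$ for a suitable $x'\in A_j$ and $m\in Y_l$ produces, for the single witness $y$, the monochromatic configuration $\{x,\ x\cdot y^{\pm n},\ x+y,\ x+2y,\dots,x+ky\}$ for both the $x\cdot y^n$ and the $x\cdot y^{-n}$ families. Here I take $a=n$ for the $x\cdot y^{-n}$ family and argue symmetrically (using part (2) of Theorem \ref{T1} in place of part (1)) for the $x\cdot y^{n}$ family; the linear polynomials $p_i(t)=it$ clearly have zero constant term and degree $1$, so $d=1$ suffices.

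The one genuinely new input is the reduction from the $IP_r^*$-density statement to a \emph{single} monochromatic $(x,y)$ rather than an infinite set $A$: this is precisely the device in \cite[Proof of Theorem 4.3]{BS}, where one tracks the element $x$ through the tower of restrictions and reads off the color of $x\cdot(\prod y_b)^{\pm a}$ from the refined coloring. I would cite that argument rather than reproduce it. The main obstacle I anticipate is bookkeeping: one must check that the sets $Q_j$ of "forbidden shifts" still have size $<T$ under the density version (the cardinality bound is unchanged, so this is routine), and that the density version of Proposition \ref{prop3} indeed holds for $\Q$ with the same proof — this follows because \cite[Theorem 7.5]{BG} is stated for general countable torsion-free abelian groups and the Bergelson--Leibman recurrence theorem underlying Proposition \ref{prop2} has a density (rather than merely piecewise-syndetic) formulation. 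Beyond that, every step is a transcription of the proof of Theorem \ref{T1} with "piecewise syndetic" replaced by "positive upper Banach density," so I would keep the write-up short and point to the two cited proofs for the details.
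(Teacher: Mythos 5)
Your overall route --- replace piecewise syndeticity by upper Banach density, invoke the density version of Proposition \ref{prop3} for linear polynomials from \cite[Theorem 7.5]{BG}, and graft this onto the argument of Theorem \ref{T1} via \cite[Proof of Theorem 4.3]{BS} --- is exactly the combination the paper intends. But your sketch has a genuine gap at the one point where this Proposition goes beyond Theorem \ref{T1}: the families now contain $(x,y)\mapsto y$, so the color of $y$ itself must be pinned down. The paper says explicitly, just before this Proposition, that the piecewise-syndetic argument ``can not determine the color of $y$''; the sole reason for passing to upper Banach density is that this is what makes the device of \cite{BS} for fixing the color of $y$ work. Your write-up never addresses this: the configuration you exhibit at the end is $\{x,\ x\cdot y^{\pm n},\ x+y,\dots,x+ky\}$ with $y$ absent, and the inductive construction you describe still draws each $y_j$ from an $IP_{r_j}$ set inside the multiply-colored thick set $\bigcup_{m\in Y_{l_j}}C_m$, so the final product $y=y_\eta\cdots y_{j-1}$ has no controlled color --- exactly the situation of Theorem \ref{T1}. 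Transcribing that proof with ``piecewise syndetic'' replaced by ``positive upper Banach density'' therefore only reproves a linear case of Theorem \ref{T1}; it does not prove the stated Proposition.

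Relatedly, you misidentify the ``genuinely new input.'' Passing from the infinite set $A$ of Theorem \ref{T1} to a single pair $(x,y)$ is a weakening, not a difficulty, and ``reading off the color of $x\cdot(\prod_b y_b)^{\pm a}$ from the refined coloring'' is already carried out in the proof of Theorem \ref{T1} (that is how the color of $x/y^a$ is matched there). What must actually be imported from \cite[Proof of Theorem 4.3]{BS} is the mechanism that places $y$ itself in the same class $C_m$, and this is where the quantitative control coming from positive upper Banach density, together with the localization data $f_{m,\cdot}$ of Lemma \ref{lem1}, is used in an essential way; your proposal neither reproduces that mechanism nor explains how it interfaces with the tower $A_1\supset\tilde A_1\supset A_2\supset\cdots$. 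Finally, your parenthetical claim that the Bergelson--Leibman recurrence underlying Proposition \ref{prop2} ``has a density formulation'' is precisely what the paper flags as unavailable for general polynomials (hence the restriction of this Proposition to the linear family, where \cite[Theorem 7.5]{BG} applies); you should not present it as routine.
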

	 Likely, for general polynomials with zero constant term, if one can build the upper Banach density version of Proposition \ref{prop3} , it is possible to confirm the color of $y$.
 \section*{Acknowledgement}
 The author is supported by NNSF of China (11971455, 12031019, 12090012). The author's thanks go to Professor Song Shao for his useful suggestions.

 \bibliographystyle{plain}
 \bibliography{ref}
 
 \end{document}